\documentclass{amsart}
\usepackage{a4wide}
\usepackage[all]{xy}
\usepackage{color}

\newcommand{\w}{\omega}
\newcommand{\IN}{\mathbb N}
\newcommand{\Ra}{\Rightarrow}
\newcommand{\F}{\mathcal F}
\newcommand{\U}{\mathcal U}

\newcommand{\IR}{\mathbb R}

\newcommand{\A}{\mathcal A}

\newtheorem{problem}{Problem}
\newtheorem{theorem}{Theorem}
\newtheorem{proposition}{Proposition}
\newtheorem{corollary}{Corollary}

\newtheorem{claim}{Claim}

\theoremstyle{definition}
\newtheorem{example}{Example}

\theoremstyle{remark}
\newtheorem{remark}{Remark}

\title{Embeddings into countably compact Hausdorff spaces}
\author{Taras Banakh}
\address{T.Banakh: Ivan Franko National University of Lviv (Ukraine) and Jan Kochanowski University in Kielce (Poland)}
\email{t.o.banakh@gmail.com}
\author{Serhii Bardyla}
\address{S. Bardyla: Institute of Mathematics, Kurt G\"{o}del Research Center, Vienna, Austria}
\thanks{The work of the second author is supported by the Austrian Science Fund FWF (Grant  I
3709 N35).}
\email{sbardyla@yahoo.com}
\author[A.~Ravsky]{Alex Ravsky}
\address{A. Ravsky: Pidstryhach Institute for Applied Problems of Mechanics and Mathematics, Lviv, Ukraine}
\email{alexander.ravsky@uni-wuerzburg.de}
\subjclass{54D10; 54D30; 54D35; 54D80}
\begin{document}
\begin{abstract}
In this paper we consider the problem of characterization of topological spaces that embed into countably compact Hausdorff  spaces. We study the separation axioms of subspaces of  countably compact Hausdorff spaces and construct an example of a regular separable scattered topological space which cannot be embedded into an Urysohn countably compact topological space but embeds into a Hausdorff countably compact space.
\end{abstract}
\maketitle

It is well-known that a topological space $X$ is homeomorphic to a subspace of a compact Hausdorff space if and only if the space $X$ is Tychonoff.

In this paper we discuss the following  problem.

\begin{problem}
Which topological spaces are homeomorphic to subspaces of countably compact Hausdorff spaces?
\end{problem}

A topological space $X$ is
\begin{itemize}
\item {\em compact} if each open cover of $X$ has a finite subcover;
\item {\em $\w$-bounded} if each countable set in $X$ has compact closure in $X$;
\item {\em countably compact} if each sequence in $X$ has an accumulation point in $X$;
\item {\em totally countably compact} if each infinite set in $X$ contains an infinite subset with  compact closure in $X$.
\end{itemize}
These properties relate as follows:
$$\xymatrix{
\mbox{compact}\ar@{=>}[r]&\mbox{$\w$-bounded}\ar@{=>}[r]&\mbox{totally countably compact}\ar@{=>}[r]&\mbox{countably compact}.
}$$

More information on various generalizations of the compactness can be found in \cite{JvM,JSS,JV,GR,N,V}.

In this paper we establish some properties of subspaces of countably compact Hausdorff spaces and hence find some necessary conditions of embeddability of topological spaces into Hausdorff countably compact spaces. Also, we construct an example of regular separable first-countable scattered topological space which cannot be embedded into a Urysohn countably compact topological space but embeds into a totally countably compact Hausdorff space. 

First we recall some results \cite{BBR} on embeddings into $\w$-bounded spaces.


We recall \cite[\S3.6]{Eng} that the Wallman compactification $W(X)$ of a topological space $X$ is the space of closed ultrafilters, i.e., families $\U$ of closed subsets of $X$ satisfying the following conditions:
\begin{itemize}
\item $\emptyset\notin\U$;
\item $A\cap B\in\U$ for any $A,B\in\U$;
\item a closed set $F\subset X$ belongs to $\mathcal U$ if $F\cap U\ne\emptyset$ for every $U\in\U$.
\end{itemize}
The Wallman compactification $W(X)$ of $X$ is endowed with the topology generated by the base consisting of the sets
$$\langle U\rangle=\{\F\in W(X):\exists F\in\F,\;F\subset U\}$$ where $U$ runs over open subsets of $X$.
By (the proof of) Theorem~\cite[3.6.21]{Eng}, for any topological space $X$ its Wallman compactification $W(X)$ is compact.

Let $j_X:X\to W(X)$ be the map assigning to each point $x\in X$ the principal ultrafilter consisting of all closed sets $F\subset X$ containing the point $x$. It is easy to see that the image $j_X(X)$ is dense in $W(X)$. By \cite[3.6.21]{Eng}, for a $T_1$-space $X$ the map $j_X:X\to W(X)$ is a topological embedding.

In the Wallman compactification $W(X)$, consider the subspace $$W_\w X={\textstyle\bigcup}\{\overline{j_X(C)}:C\subset X,\;|C|\le\w\},$$ which is the union of closures of countable subsets of $j_X(X)$ in $W(X)$. The space $W_\w X$ will be called the {\em Wallman $\w$-compactification} of $X$. 

Following \cite{BBR}, we define a topological space $X$ to be {\em $\overline\w$-normal} if for any closed separable subspace $C\subset X$ and any disjoint closed sets $A,B\subset C$ there are disjoint open sets $U,V\subset X$ such that $A\subset U$ and $B\subset V$.

The properties of the Wallman $\w$-compactification are described in the following theorem whose proof can be found in \cite{BBR}. 

\begin{theorem}\label{t:w} For any \textup{(}$\overline\w$-normal\/\textup{)} topological space $X$, its Wallman $\w$-compactification $W_\omega X$  is $\w$-bounded \textup{(}and Hausdorff\textup{)}.
\end{theorem}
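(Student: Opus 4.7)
The plan is to treat the two conclusions separately: $\w$-boundedness comes for free from compactness of the ambient $W(X)$ together with closure of the definition of $W_\w X$ under countable unions, while Hausdorffness is the substantial part and is exactly where $\overline\w$-normality enters.

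For $\w$-boundedness I would take an arbitrary countable set $D=\{\F_n:n\in\w\}\subset W_\w X$ and, for each $n$, pick a countable $C_n\subset X$ with $\F_n\in\overline{j_X(C_n)}$ (closure in $W(X)$). The union $C=\bigcup_n C_n$ is still countable, so monotonicity of closure gives $D\subset \overline{j_X(C)}\subset W_\w X$. Since $W(X)$ is compact, $\overline{j_X(C)}$ is compact, and because it already sits inside $W_\w X$ the closure of $D$ in $W_\w X$ agrees with its closure in $W(X)$ and is therefore a closed subset of a compact space, hence compact.

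For Hausdorffness, given distinct $\F,\mathcal G\in W_\w X$, the first step is to choose countable $C,C'\subset X$ with $\F\in\overline{j_X(C)}$ and $\mathcal G\in\overline{j_X(C')}$. The crucial observation is the implication $\F\in\overline{j_X(C)}\Ra \overline{C}\in\F$. Indeed, if $\overline{C}\notin\F$ then, by the third axiom of a closed ultrafilter, some $F\in\F$ is disjoint from $\overline{C}$; then $U=X\setminus\overline{C}$ is open and contains $F$, so $\F\in\langle U\rangle$, yet no $c\in C$ satisfies $j_X(c)\in\langle U\rangle$ because that would require a closed superset of $\{c\}$ inside $U$, contradicting $c\in\overline{C}$. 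Setting $E=C\cup C'$ and $K=\overline{E}$, upward-closedness of $\F$ and $\mathcal G$ among closed sets yields $K\in\F\cap\mathcal G$, and $K$ is a closed separable subspace of $X$. Since $\F\ne\mathcal G$, maximality supplies disjoint $F\in\F$, $G\in\mathcal G$, and then $F\cap K$, $G\cap K$ are disjoint closed subsets of $K$. Now $\overline\w$-normality of $X$ delivers disjoint open $U,V\subset X$ with $F\cap K\subset U$ and $G\cap K\subset V$. The basic Wallman sets $\langle U\rangle$ and $\langle V\rangle$ are automatically disjoint (a common ultrafilter would contain closed subsets of both $U$ and $V$, forcing $\emptyset$ into the ultrafilter), and $\F\in\langle U\rangle$ because $F\cap K\in\F$ with $F\cap K\subset U$; symmetrically $\mathcal G\in\langle V\rangle$. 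Intersecting with $W_\w X$ gives the required disjoint neighborhoods.

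The main technical kernel is the implication $\F\in\overline{j_X(C)}\Ra \overline{C}\in\F$, which converts a topological membership in the Wallman closure into the combinatorial statement about the ultrafilter that lets us invoke $\overline\w$-normality on the separable closed subspace $\overline{E}$. Once that bridge is built, the rest is a clean application of ultrafilter maximality together with the standard incidence relations in the Wallman base, and the $\w$-boundedness part reduces to the one-line observation that countable unions of countable sets are countable.
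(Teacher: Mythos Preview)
The paper does not actually prove this theorem; it merely records the statement and refers to \cite{BBR} for the argument, so there is no in-paper proof to compare against line by line.

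Your argument is correct and is precisely the expected one. The $\w$-boundedness part is handled exactly as it should be: closure of $W_\w X$ under taking $W(X)$-closures of countable subsets of $j_X(X)$, together with compactness of $W(X)$, immediately gives that every countable subset of $W_\w X$ has compact closure already inside $W_\w X$. For Hausdorffness, the pivotal step is indeed the implication $\F\in\overline{j_X(C)}\Rightarrow\overline{C}\in\F$, and your justification via the basic open set $\langle X\setminus\overline{C}\rangle$ is the clean way to see it. After that, intersecting with the common separable closed set $K=\overline{C\cup C'}$ puts the two disjoint members $F\cap K\in\F$ and $G\cap K\in\mathcal G$ inside a closed separable subspace, which is exactly the configuration to which $\overline\w$-normality applies, and the resulting disjoint open $U,V\subset X$ lift to disjoint basic open sets $\langle U\rangle,\langle V\rangle$ in $W(X)$. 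Every verification you sketch (upward closure of closed ultrafilters, disjointness of $\langle U\rangle$ and $\langle V\rangle$, the existence of disjoint $F\in\F$, $G\in\mathcal G$ from maximality) is routine and correct. This is essentially the argument one finds in \cite{BBR}.
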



A topological space $X$ is called
\begin{itemize}
\item {\em first-countable} at a point $x\in X$ if it has a countable neighborhood base at $x$;
\item {\em Fr\'{e}chet-Urysohn at a point $x\in X$} if for each subset $A$ of $X$ with $x\in\bar A$ there exists a sequence $\{a_n\}_{n\in\omega}\subset A$ that converges to $x$;
\item {\em regular at a point $x\in X$} if any neighborhood of $x$ contains a closed neighborhood of $x$.
\item {\em completely regular at a point $x\in X$} if for any neighborhood $U\subset X$ of $x$ there exists a continuous function $f:X\to[0,1]$ such that $f(x)=1$ and $f(X\setminus U)\subset\{0\}$.
\end{itemize}

If for each point $x$ of a topological space $X$ there exists a countable family $\mathcal{O}$ of open neighborhoods of $x$ such that $\bigcap \mathcal{O}=\{x\}$, then we shall say that the space $X$ has {\em countable pseudocharacter}.

\begin{theorem}\label{t:N1} Let $X$ be a subspace of a countably compact Hausdorff space $Y$. If $X$ is first-countable at a point $x\in X$, then $x$ is regular at the point $x$.
\end{theorem}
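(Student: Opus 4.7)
My plan is to argue by contradiction: assume $X$ is not regular at $x$, produce a sequence whose accumulation point in $Y$ (which exists by countable compactness) must coincide with $x$, and then read off an immediate contradiction.

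To begin, suppose some open neighborhood $U$ of $x$ in $X$ contains no closed $X$-neighborhood of $x$. I would fix a decreasing countable neighborhood base $\{U_n\}_{n\in\omega}$ at $x$ in $X$, passing to a tail if necessary so that $U_n\subset U$ for every $n$. Writing $U_n=V_n\cap X$ with $V_n$ open in $Y$ containing $x$, and intersecting finitely many, I may assume $V_0\supset V_1\supset\cdots$. Since $\overline{U_n}^X$ is a closed $X$-neighborhood of $x$, our assumption yields $\overline{U_n}^X\setminus U\ne\emptyset$, so pick $x_n\in\overline{U_n}^X\setminus U$.

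Now I apply countable compactness of $Y$ to obtain an accumulation point $y\in Y$ of the sequence $(x_n)$. Because $x_k\in\overline{U_k}^X\subset\overline{U_n}^Y$ whenever $k\ge n$, the accumulation point satisfies $y\in\bigcap_n\overline{U_n}^Y$. The key step is to show $y=x$: if not, Hausdorffness of $Y$ supplies disjoint open $Y$-neighborhoods $W_x\ni x$ and $W_y\ni y$; then $W_x\cap X$ contains some $U_n$, whence $U_n\subset W_x$ is disjoint from $W_y$, contradicting $y\in\overline{U_n}^Y$. Once $y=x$ is established, the open $Y$-neighborhood $V_0$ of $x$ contains $x_n$ for infinitely many $n$; but for such $n$ one has $x_n\in V_0\cap X=U_0\subset U$, contradicting the choice $x_n\notin U$.

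The heart of the argument — and the only place where Hausdorffness of $Y$ is essential rather than just countable compactness — is the identification $y=x$. This step pivots on being able to lift the chosen base $\{U_n\}$ in $X$ to a coherent decreasing sequence $\{V_n\}$ of open sets in $Y$, so that any accumulation point in $Y$ of a sequence hugging $x$ within $X$ is trapped in every $\overline{U_n}^Y$; Hausdorffness then funnels all such cluster points down to $x$ itself. I expect this to be the only genuinely substantive step of the proof, the rest being bookkeeping with the countable base.
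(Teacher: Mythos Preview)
Your argument is correct and follows essentially the same route as the paper's proof: pick $x_n\in\overline{U_n}^X\setminus U$, use countable compactness to get an accumulation point $y$, and use Hausdorffness together with the countable base to reach a contradiction. The only cosmetic difference is the order of the last two steps: the paper observes directly that $y\ne x$ (since all $x_n$ lie in the $Y$-closed set $Y\setminus V$ for any open $V\subset Y$ with $V\cap X=U$) and then shows $y$ cannot be an accumulation point; you instead prove $y=x$ by the same Hausdorff separation and then extract a second contradiction from $x_n\in V_0\cap X=U_0\subset U$. Your detour through the lifts $V_n$ and the identity $y=x$ is a little longer than necessary but entirely sound.
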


\begin{proof} Fix a countable neighborhood base $\{U_n\}_{n\in\IN}$ at $x$ and assume that $X$ is not regular at $x$. Consequently, there exists an open neighborhood $U_0$ of $x$ such that $\overline{V}\not\subset U_0$ for any neighborhood $V$ of $x$. Replacing each basic neighborhood $U_n$ by $\bigcap_{k\le n}U_k$, we can assume that $U_n\subset U_{n-1}$ for every $n\in\IN$. The choice of the neighborhood $U_0$ ensures that for every $n\in\IN$ the set $\overline{U}_n\setminus U_0$ contains some point $x_n$. Since the space $Y$ is countably compact and Hausdorff, the sequence $(x_n)_{n\in\w}$ has an accumulation point $y\in Y\setminus U_0$. By the Hausdorff property of $Y$, there exists a neighborhood $V\subset Y$ of $x$ such that $y\notin \overline{V}$. Find $n\in\w$ such that $U_n\subset V$ and observe that $O_y:=Y\setminus\overline{V}$ is a neighborhood of $y$ such that  $O_y\cap\{x_i:i\in\w\}\subset \{x_i\}_{i<n}$, which means that $y$ is not an accumulating point of the sequence $(x_i)_{i\in\w}$.
\end{proof}

\begin{remark} Example~6.1 from~\cite{BBR} shows that in Theorem~\ref{t:N1} the regularity of $X$ at the point $x$  cannot be improved to the complete regularity at $x$.
\end{remark}

\begin{corollary}\label{c1}
Let $X$ be a subspace of a countably compact Hausdorff space $Y$. If $X$ is first-countable, then $X$ is regular.
\end{corollary}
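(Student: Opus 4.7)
The corollary is essentially a pointwise application of Theorem~\ref{t:N1}. The plan is simply this: since $X$ is first-countable, it is by definition first-countable at every point $x \in X$, so Theorem~\ref{t:N1} (applied at each such $x$) gives that $X$ is regular at each $x$. A space that is regular at every one of its points is, by definition, regular, so we are done.

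There is essentially no obstacle here: the only thing to verify is that the standard definition of regularity of a space coincides with the conjunction of pointwise regularity at each of its points, which is immediate from the definition recalled earlier in the paper (``any neighborhood of $x$ contains a closed neighborhood of $x$''). Thus the proof is a one-line deduction and requires no new ideas beyond Theorem~\ref{t:N1} itself.
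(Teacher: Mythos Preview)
Your proposal is correct and matches the paper's approach: the corollary is stated without proof in the paper, precisely because it follows immediately from Theorem~\ref{t:N1} by applying it at every point, exactly as you describe.
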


The following example shows that Theorem~\ref{t:N1} cannot be generalized over Fr\'{e}chet-Urysohn spaces with countable pseudocharacter.

\begin{example}\label{e3} There exists a Hausdorff space $X$ such that
\begin{enumerate}
\item $X$ is locally countable and hence has countable pseudocharacter;
\item $X$ is separable and Fr\'echet-Urysohn;
\item $X$ is not regular;
\item $X$ is a subspace of a totally countably compact Hausdorff space.
\end{enumerate}
\end{example}

\begin{proof} Choose any point $\infty\notin \w\times\w$ and consider the space $Y=\{\infty\}\cup(\w\times\w)$ endowed with the topology consisting of the sets $U\subset Y$ such that if $\infty \in U$, then for every $n\in\w$ the complement $(\{n\}\times \w)\setminus U$ is finite. The definition of this topology ensures that $Y$ is Fr\'echet-Urysohn at the unique non-isolated point $\infty$ of $Y$.

Let $\F$ be the family of closed infinite subsets of $Y$ that do not contain the point $\infty$. The definition of the topology on $Y$ implies that for every $F\in\F$ and $n\in\w$ the intersection $(\{n\}\times\w)\cap F$ is finite. By the Kuratowski-Zorn Lemma, the family $\F$ contains a maximal almost disjoint subfamily $\A\subset\F$. The maximality of $\A$ guarantees that each set $F\in\F$ has infinite intersection with some set $A\in\A$.

Consider the space $X=Y\cup\A$ endowed with the topology consisting of the sets $U\subset X$ such that $U\cap Y$ is open in $Y$ and for any $A\in\A\cap U$ the set $A\setminus U\subset\w\times\w$ is finite.

We claim that the space $X$ has the properties (1)--(4). The definition of the topology of $X$ implies that $X$ is separable, Hausdorff and locally countable, which implies that $X$ has countable pseudocharacter. Moreover, $X$ is first-countable at all points except for $\infty$. At the point $\infty$ the space $X$ is Fr\'echet-Urysohn (because its open subspace $Y$ is Fr\'echet-Urysohn at $\infty$).

The maximality of the maximal almost disjoint family $\A$ guarantees that each neighborhood $U\subset Y\subset X$ of $\infty$ has an infinite intersection with some set $A\in\A$, which implies that $A\in\overline{U}$ and hence $\overline{U}\not\subset Y$. This means that $X$ is not regular (at $\infty$).

In the Wallman compactification $W(X)$ of the space $X$ consider the subspace $Z:=X\cup W_\omega\A=Y\cup W_\omega\A$.
We claim that the space $Z$ is Hausdorff and totally countably compact. To prove that $Z$ is Hausdorff, take two distinct ultrafilters $a,b\in Z$. If the ultrafilters $a,b$ are principal, then by the Hausdorff property of $X$, they have disjoint neighborhoods in $W(X)$ and hence in $Z$. Now assume that one of the ultrafilters $a$ or $b$ is principal and the other is not. We lose no generality assuming that $a$ is principal and $b$ is not. If $a\ne\infty$, then we can use the regularity of the space $X$ at $a$ and prove that $a$ and $b$ have disjoint neighborhoods in $W(X)\supset Z$. So, assume that $a=\infty$. It follows from $b\in Z=X\cup W_{\w} \A$ that the ultrafilter $b$ contains some countable set $\{A_n\}_{n\in\w}\subset\A$.  Consider the set $$V=\bigcup_{n\in\w}\big(\{A_n\}\cup A_n\setminus\bigcup_{k\le n}\{k\}\times\w\big)$$and observe that $V$ has finite intersection with every set $\{k\}\times\w$, which implies that $Y\setminus V$ is a neighborhood of $\infty$. Then $\langle Y\setminus V\rangle$ and $\langle V\rangle$ are disjoint open neighborhoods of $a=\infty$ and $b$ in $W(X)$.

Finally, assume that both ultrafilters $a,b$ are not principal. Since $a,b\in W_{\w} \A$ are distinct, there are disjoint countable sets $\{A_n\}_{n\in\w},\{B_n\}_{n\in\w}\subset\A$ such that $\{A_n\}_{n\in\w}\in a$ and $\{B_n\}_{n\in\w}\in b$.
Observe that the sets $$V=\bigcup_{n\in\w}(\{A_n\}\cup A_n\setminus\bigcup_{k\le n}B_k)\mbox{ \ and \ }W=\bigcup_{n\in\w}(\{B_n\}\cup B_n\setminus\bigcup_{k\le n}A_k)$$ are disjoint and open in $X$. Then $\langle V\rangle$ and $\langle W\rangle$ are disjoint open neighborhoods of the ultrafilters $a,b$ in $W(X)$, respectively.

To see that $Z$ is totally countably compact, take any infinite set $I\subset Z$. We should find an infinite set $J\subset I$ with compact closure $\bar J$ in $Z$. We lose no generality assume that $I$ is countable and $\infty\notin I$. If $J=I\cap W_\omega\A$ is infinite, then  $\bar J$ is compact by the $\w$-boundedness of $W_\omega\A$, see Theorem~\ref{t:w}. If $I\cap W_\omega\A$ is finite, then $I\cap Z\setminus W_\omega\A=I\cap Y=I\cap(\w\times\w)$ is infinite. If for some $n\in\w$ the set $J_n=I\cap(\{n\}\times\w)$ is infinite, then $\bar J_n=J_n\cup\{\infty\}$ is compact by the definition of the topology of the space $Y$. If for every $n\in\w$ the set $I\cap(\{n\}\times\w)$ is finite, then $I\cap(\w\times\w)\in\F$ and by the maximality of the family $\A$, for some set $A\in\A$ the intersection $J=A\cap I$ is infinite, and then $\bar J=J\cup\{A\}$ is compact.
\end{proof}

A topological space $X$ is called {\em weakly $\infty$-regular} if for any infinite closed subset $F\subset X$ and point $x\in X\setminus F$ there exist disjoint open sets $V,U\subset X$ such that $x\in V$ and $U\cap F$ is infinite.

\begin{proposition} Each subspace $X$ of a countably compact Hausdorff space $Y$ is weakly $\infty$-regular.
\end{proposition}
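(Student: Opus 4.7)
The plan is to produce the required disjoint open sets by transferring a separation performed in the ambient space $Y$ down to $X$. Given infinite closed $F\subset X$ and $x\in X\setminus F$, the first step is to enumerate a countably infinite set $\{f_n\}_{n\in\w}$ of pairwise distinct points in $F$, and then apply countable compactness of $Y$ to obtain a cluster point $y\in Y$ of the sequence $(f_n)_{n\in\w}$.

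The key observation is that such a cluster point $y$ cannot coincide with $x$. Indeed, since $F$ is closed in $X$, there exists a closed set $G\subset Y$ with $F=X\cap G$; this forces $\overline{F}^{\,Y}\subset G$ and hence $X\cap\overline{F}^{\,Y}=F$. If $y$ were equal to $x$, then $y$ being a cluster point of $(f_n)\subset F$ would give $x=y\in\overline{F}^{\,Y}$, and combined with $x\in X$ this would yield $x\in F$, contradicting $x\in X\setminus F$.

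Once $y\ne x$ is established, I would use the Hausdorff property of $Y$ to pick disjoint open $V',U'\subset Y$ with $x\in V'$ and $y\in U'$. Since $y$ is a cluster point of $(f_n)$ in $Y$, every neighborhood of $y$, in particular $U'$, meets the set $\{f_n:n\in\w\}\subset F$ in an infinite set. Setting $V:=V'\cap X$ and $U:=U'\cap X$ then yields disjoint open subsets of $X$ with $x\in V$ and $U\cap F$ infinite, which is exactly weak $\infty$-regularity.

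I do not anticipate a real obstacle here; the only point that needs a little care is the verification that the cluster point $y$ given by countable compactness of $Y$ is distinct from $x$, which hinges on correctly using that $F$ is closed \emph{in $X$} (not in $Y$) to identify $X\cap\overline{F}^{\,Y}$ with $F$. After that the Hausdorff separation argument is routine.
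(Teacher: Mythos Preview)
Your argument is correct and follows essentially the same route as the paper: locate an accumulation point $y$ of $F$ (or of a countable subset of $F$) in $Y$, use that $F$ is closed in $X$ to see $x\notin\overline{F}^{\,Y}$ and hence $y\ne x$, then separate $x$ and $y$ by Hausdorffness and intersect with $X$. The only cosmetic difference is that the paper works directly with $\overline{F}^{\,Y}$ rather than first extracting a sequence, but the content is identical.
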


\begin{proof} Given an infinite closed subset $F\subset X$ and a point $x\in X\setminus F$, consider the closure $\bar F$ of $F$ in $Y$ and observe that $x\notin\bar F$. By the countable compactness of $Y$, the infinite set $F$ has an accumulation point $y\in \bar F$. By the Hausdorff property of $Y$, there are two disjoint open sets $V,U\subset Y$ such that $x\in V$ and $y\in U$. Since $y$ is an accumulation point of the set $F$, the intersection $F\cap U$ is infinite. Then $V\cap X$ and $U\cap X$ are two disjoint open sets in $X$ such that $x\in V\cap X$ and $F\cap U\cap X$ is infinite, witnessing that the space $X$ is weakly $\infty$-regular.
\end{proof}

A subset $D$ of a topological space $X$ is called
\begin{itemize}
\item {\em discrete} if each point $x\in D$ has a neighborhood $O_x\subset X$ such that $D\cap O_x=\{x\}$;
\item {\em strictly discrete} if each point $x\in D$ has a neighborhood $O_x\subset X$ such that the family $(O_x)_{x\in D}$ is disjoint in the sense that $O_x\cap O_y=\emptyset$ for any distinct points $x,y\in D$;
\item {\em strongly discrete} if each point $x\in D$ has a neighborhood $O_x\subset X$ such that the family $(O_x)_{x\in D}$ is disjoint and locally finite in $X$.
\end{itemize}
It is clear that for every subset $D\subset X$ we have the implications
$$\mbox{strongly discrete $\Ra$ strictly discrete $\Ra$ discrete}.$$

\begin{theorem}\label{t:N2} Let $X$ be a subspace of a countably compact Hausdorff space $Y$. Then each infinite subset $I\subset X$ contains an infinite subset $D\subset I$ which is strictly discrete in $X$.
\end{theorem}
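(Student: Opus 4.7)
The plan is to pass to a countably infinite subset of $I$, enumerate it injectively as $(x_n)_{n\in\w}$, invoke the countable compactness of $Y$ to produce an accumulation point $y\in Y$ of the sequence $(x_n)_{n\in\w}$, and then extract a strictly discrete subsequence by repeatedly separating a point of $I$ from $y$ using the Hausdorff property of $Y$. Because the $x_n$ are pairwise distinct, at most one of them equals $y$; after discarding that point we may assume $y\notin I$, and every neighborhood of $y$ still meets $I$ in an infinite set.

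Then I would build by induction on $n\in\w$ a point $d_n\in I$, an open set $O_n\subset Y$ containing $d_n$, and an open neighborhood $W_n\subset Y$ of $y$, subject to the constraints $O_n\subset W_{n-1}$ (with the convention $W_{-1}=Y$), $W_n\subset W_{n-1}$, and $O_n\cap W_n=\emptyset$. At the inductive step, the set $W_n\cap I$ is infinite because $y$ is an accumulation point of $I$, so I may choose $d_{n+1}\in W_n\cap I\setminus\{d_0,\dots,d_n\}$; using that $Y$ is Hausdorff, pick disjoint open sets $U\ni d_{n+1}$ and $V\ni y$ in $Y$, and put $O_{n+1}=U\cap W_n$ and $W_{n+1}=V\cap W_n$.

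Pairwise disjointness of $(O_n)_{n\in\w}$ is then immediate: for indices $j<n$ one has $O_n\subset W_{n-1}\subset W_j$ and $O_j\cap W_j=\emptyset$, so $O_n\cap O_j=\emptyset$. Hence the sets $O_n\cap X$ are pairwise disjoint open neighborhoods in $X$ witnessing that $D=\{d_n:n\in\w\}\subset I$ is strictly discrete in $X$.

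The only real point requiring care is the bookkeeping of the nested neighborhoods $W_n$ of $y$: at each stage the new neighborhood has to be small enough that the next chosen $O_{n+1}$ lies inside all previous $W_j$ and is disjoint from $y$, which is exactly what guarantees automatic disjointness from all earlier $O_j$. Once this scheme is in place no other subtlety arises, as only the Hausdorffness and countable compactness of $Y$ are used.
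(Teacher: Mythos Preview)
Your proposal is correct and follows essentially the same approach as the paper: pick an accumulation point $y$ of $I$ in $Y$, then inductively separate points of $I$ from $y$ by Hausdorff neighborhoods nested inside the previous neighborhood of $y$, yielding a pairwise disjoint family of open sets. The only cosmetic differences are that you first pass to a countable subset and remove $y$ from $I$, whereas the paper simply avoids $y$ at each inductive step; neither change affects the argument.
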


\begin{proof} By the countable compactness of $Y$, the set $I$ has an accumulation point $y\in Y$. Choose any point $x_0\in I\setminus\{y\}$ and using the Hausdorff property of $Y$, find a disjoint open neighborhoods $V_0$ and $U_0$ of the points $x_0$ and $y$, respectively. Choose any point $y_1\in U_0\cap I\setminus\{y\}$ and using the Hausdorff property of $Y$ choose open disjoint neighborhoods $V_1\subset U_0$ and $U_1\subset U_0$ of the points $x_1$ and $y$, respectively. Proceeding by induction, we can construct a sequence $(x_n)_{n\in\w}$ of points of $X$ and sequences $(V_n)_{n\in\w}$ and $(U_n)_{n\in\w}$ of open sets in $Y$ such that for every $n\in\IN$ the following conditions are satisfied:
\begin{itemize}
\item[1)] $x_n\in V_n\subset U_{n-1}$;
\item[2)] $y\in U_n\subset U_{n-1}$;
\item[3)] $V_n\cap U_n=\emptyset$.
\end{itemize}
The inductive conditions imply that the sets $V_n$, $n\in\w$, are pairwise disjoint, witnessing that the set $D=\{x_n\}_{n\in\w}\subset I$ is strictly discrete in $X$.
\end{proof}

For closed discrete subspaces in Lindel\"of subspaces, the strict discreteness of the set $D$ in Theorem~\ref{t:N2} can be improved to the strong discreteness. Let us recall that a topological space $X$ is {\em Lindel\"of} if each open cover of $X$ contains a countable subcover.

\begin{theorem}\label{t:N3} Let $X$ be a Lindel\"of subspace of a countably compact Hausdorff space $Y$. Then each infinite closed discrete subset $I\subset X$ contains an infinite subset $D\subset I$ which is strongly discrete in $X$.
\end{theorem}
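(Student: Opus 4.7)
The plan is to enhance the inductive construction from Theorem~\ref{t:N2} by using the Lindel\"of hypothesis to encode local finiteness into the choice of the open sets $V_n$. Two preliminary observations set the stage. First, as a closed discrete subspace of the Lindel\"of space $X$, the set $I$ is automatically countable. Second, by countable compactness of $Y$, the infinite set $I$ has an accumulation point $y\in Y$; moreover $y$ necessarily lies outside $X$, since a closed discrete subset of $X$ has no accumulation point in $X$.

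I would then exploit $y\notin X$ via a Lindel\"of trick. For every $x\in X$, use the Hausdorff property to choose disjoint open neighborhoods $P_x\ni x$ and $Q_x\ni y$ in $Y$, and apply the Lindel\"of property of $X$ to the cover $\{P_x\cap X:x\in X\}$ to extract a countable subcover $\{P_{z_n}\cap X\}_{n\in\w}$. Write $Q_n:=Q_{z_n}$ and $R_n:=Q_0\cap\cdots\cap Q_n$, so that $(R_n)_{n\in\w}$ is a decreasing sequence of open neighborhoods of $y$ in $Y$, and every point of $X$ lies in some $P_{z_m}$, which is disjoint from $Q_m$ and hence from $R_k$ for all $k\ge m$.

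Now repeat the inductive construction from the proof of Theorem~\ref{t:N2}, but carried out inside the $R_n$'s. Set $U_{-1}:=Y$. At stage $n$, the set $R_n\cap U_{n-1}$ is an open neighborhood of the accumulation point $y$, hence meets $I$ in infinitely many points; pick $x_n\in I\cap R_n\cap U_{n-1}$ distinct from $x_0,\dots,x_{n-1}$, and use the Hausdorff property (noting $x_n\ne y$ since $x_n\in X$ and $y\notin X$) to find disjoint open sets $V_n,U_n\subset R_n\cap U_{n-1}$ with $x_n\in V_n$ and $y\in U_n$. Exactly as in Theorem~\ref{t:N2}, the nesting $V_m\subset U_{m-1}\subset U_n$ for $m>n$ together with $V_n\cap U_n=\emptyset$ delivers pairwise disjointness of the $V_n$'s.

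It remains to verify local finiteness of $(O_{x_n})_{n\in\w}:=(V_n\cap X)_{n\in\w}$ in $X$. Any $x\in X$ lies in some $P_{z_m}$, and for every $k\ge m$ we have $V_k\subset R_k\subset Q_m$ while $P_{z_m}\cap Q_m=\emptyset$; hence the open neighborhood $P_{z_m}\cap X$ of $x$ meets at most the sets $O_{x_0},\dots,O_{x_{m-1}}$. The main obstacle is that the accumulation point $y$ need not have countable character in $Y$, so one cannot simply arrange the $U_n$'s themselves to shrink down to $y$; the Lindel\"of hypothesis on $X$ is precisely what allows us to manufacture a useful countable chain of ``small'' neighborhoods $R_n$ of $y$ out of a countable cover of $X$.
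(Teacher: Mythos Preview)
Your proof is correct and follows essentially the same approach as the paper's own proof: both use the Hausdorff property and the Lindel\"of hypothesis to manufacture a decreasing countable chain of open neighborhoods of the external accumulation point $y$ (your $R_n$, the paper's $W_n=\bigcap_{k\le n}W_{x_k}$), then run the nested-neighborhood induction of Theorem~\ref{t:N2} inside that chain to secure local finiteness. The only differences are cosmetic---notation, and the harmless observation that $I$ is automatically countable, which neither proof actually uses.
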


\begin{proof} By the countable compactness of $Y$, the set $I$ has an accumulation point $y\in Y$. Since $I$ is closed and discrete in $X$, the point $y$ does not belong to the space $X$. By the Hausdorff property of $Y$, for every $x\in X$ there  are disjoint open sets $V_x,W_x\subset Y$ such that $x\in V_x$ and $y\in W_x$. Since the space $X$ is Lindel\"of, the open cover $\{V_x:x\in X\}$ has a countable subcover $\{V_{x_n}\}_{n\in\w}$. For every $n\in\w$ consider the open neighborhood $W_n=\bigcap_{k\le n}W_{x_k}$ of $y$.

 Choose any point $y_0\in I\setminus\{y\}$ and using the Hausdorff property of $Y$, find a disjoint open neighborhoods $V_0$ and $U_0\subset W_0$ of the points $y_0$ and $y$, respectively. Choose any point $y_1\in U_0\cap W_1\cap I\setminus\{y\}$ and using the Hausdorff property of $Y$ choose open disjoint neighborhoods $V_1\subset U_0$ and $U_1\subset U_0\cap W_1$ of the points $y_1$ and $y$, respectively. Proceeding by induction, we can construct a sequence $(y_n)_{n\in\w}$ of points of $X$ and sequences $(V_n)_{n\in\w}$ and $(U_n)_{n\in\w}$ of open sets in $Y$ such that for every $n\in\IN$ the following conditions are satisfied:
\begin{itemize}
\item[1)] $y_n\in V_n\subset U_{n-1}\cap W_n$;
\item[2)] $y\in U_n\subset U_{n-1}\cap W_n$;
\item[3)] $V_n\cap U_n=\emptyset$.
\end{itemize}
The inductive conditions imply that the family $(V_n)_{n\in\w}$ are pairwise disjoint, witnessing that the set $D=\{y_n\}_{n\in\w}\subset I$ is strictly discrete in $X$.
To show that $D$ is strongly discrete, it remains to show that the family $(V_n)_{n\in\w}$ is locally finite in $X$. Given any point $x\in X$, find $n\in\w$ such that $x\in V_{x_n}$ and observe that for every $i>n$ we have $V_i\cap V_{x_n}\subset W_i\cap V_{x_n}\subset W_{n}\cap V_{x_n}=\emptyset$.
\end{proof}

A topological space $X$ is called {\em $\ddot\w$-regular} if it for any closed discrete subset $F\subset X$ and point $x\in X\setminus F$ there exist disjoint open sets $U_F$ and $U_x$ in $X$ such that $F\subset U_F$ and $x\in U_x$.


\begin{proposition}\label{p:sd} Each countable closed discrete subset $D$ of a (Lindel\"of) $\ddot\w$-regular $T_1$-space $X$ is strictly discrete (and strongly discrete) in $X$.
\end{proposition}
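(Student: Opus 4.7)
The plan is to enumerate $D=\{d_n\}_{n\in\omega}$ and use $\ddot\omega$-regularity to separate each $d_n$ from the remaining points of $D$. First I would check that each $D_n:=D\setminus\{d_n\}$ is closed and discrete in $X$: discreteness is obvious, and closedness follows because the complement $(X\setminus D)\cup\{d_n\}$ is the union of the open set $X\setminus D$ with the point $d_n$, which has an open neighborhood meeting $D$ only at $d_n$ (since $D$ itself is discrete in $X$). Applying $\ddot\omega$-regularity to $D_n$ and $d_n$ produces disjoint open sets $U_n\ni d_n$ and $V_n\supset D_n$. I would then set
$$W_n:=U_n\cap\bigcap_{k<n}V_k,$$
which is open, contains $d_n$ (since $d_n\in D_k\subset V_k$ for every $k<n$), and for $n<m$ satisfies $W_m\subset V_n$ and $W_n\subset U_n$ with $U_n\cap V_n=\emptyset$. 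The family $(W_n)_{n\in\omega}$ is therefore pairwise disjoint, witnessing the strict discreteness of $D$.

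For the Lindel\"of case I would adapt the cover-shrinking trick from the proof of Theorem~\ref{t:N3}. For each $x\in X\setminus D$, applying $\ddot\omega$-regularity to the closed discrete set $D$ and the point $x$ yields disjoint open sets $G_x\ni x$ and $H_x\supset D$. Then $\{G_x:x\in X\setminus D\}\cup\{W_n:n\in\omega\}$ is an open cover of $X$, and Lindel\"ofness gives a countable subcover which, after appending the countably many $W_n$, can be written as $\{G_{y_j}:j\in\omega\}\cup\{W_n:n\in\omega\}$ for some sequence $(y_j)_{j\in\omega}$ in $X\setminus D$. I would then define the refined neighborhoods
$$W'_n:=W_n\cap\bigcap_{j\le n}H_{y_j},$$
each of which is open, still contains $d_n$ (because $d_n\in D\subset H_{y_j}$ for every $j$), and the family $(W'_n)_{n\in\omega}$ inherits pairwise disjointness from $(W_n)_{n\in\omega}$.

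Local finiteness would be verified at an arbitrary $y\in X$ by splitting into the two cases furnished by the cover. If $y\in G_{y_j}$, then for every $n\ge j$ one has $W'_n\subset H_{y_j}$ and so $W'_n\cap G_{y_j}=\emptyset$; hence $G_{y_j}$ is a neighborhood of $y$ meeting only the finite subfamily $\{W'_n:n<j\}$. If instead $y\in W_m$, then $W_m$ is a neighborhood of $y$ that intersects the disjoint family $(W'_n)_{n\in\omega}$ only in $W'_m$. The main technical obstacle is choosing the refinement $W'_n\subset W_n$ correctly: it must be shrunk enough to produce local finiteness but still contain $d_n$, and the crucial observation making this work is that $D\subset H_{y_j}$ for all $j$, so intersecting with $\bigcap_{j\le n}H_{y_j}$ does not evict any point of $D$ from its neighborhood.
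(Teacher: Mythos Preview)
Your proof is correct and follows the same two-stage outline as the paper's (first obtain disjoint neighborhoods, then shrink them via a countable Lindel\"of subcover to force local finiteness). The technical execution differs in two places. For strict discreteness the paper runs a \emph{nested} induction, at stage $n$ separating $z_n$ from the tail $\{z_k\}_{k>n}$ inside the open set $W_{n-1}$ produced at the previous step; you instead separate each $d_n$ from the full complement $D\setminus\{d_n\}$ independently and then intersect with the earlier $V_k$'s, which is a bit more symmetric and avoids the nesting. For the Lindel\"of refinement the paper observes that $\ddot\omega$-regularity gives each $x\in X\setminus V$ a neighborhood $O_x$ with $\overline{O}_x\cap D=\emptyset$ and defines $U_n=V_n\setminus\bigcup_{k\le n}\overline{O}_{x_k}$, whereas you keep both halves $(G_x,H_x)$ of the separation and set $W'_n=W_n\cap\bigcap_{j\le n}H_{y_j}$; your variant never mentions closures. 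Both routes are equally short and direct.
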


\begin{proof} The space $X$ is Hausdorff, being an $\ddot\w$-regular $T_1$-space. If the subset $D\subset X$ is finite, then $D$ is strongly discrete by the Hausdorff property of $X$. So, assume that $D$ is infinite and hence $D=\{z_n\}_{n\in\w}$ for some pairwise distinct points $z_n$. By the $\ddot\w$-regularity there are two disjoint open sets $V_0,W_0\subset X$ such that $z_0\in V_0$ and $\{z_n\}_{n\ge 1}\subset W_0$.

Proceeding by induction, we can construct sequences of open sets $(V_n)_{n\in\w}$ and $(W_n)_{n\in\w}$ in $X$ such that for every $n\in\w$ the following conditions are satisfied:
\begin{itemize}
\item $z_n\in V_n\subset W_{n-1}$;
\item $\{z_k\}_{k>n}\subset W_n\subset W_{n-1}$;
\item $V_n\cap W_n=\emptyset$.
\end{itemize}
These conditions imply that the family $(V_n)_{n\in\w}$ is disjoint, witnessing that the set $D$ is strictly discrete in $X$.

Now assume that the space $X$ is Lindel\"of and let $V=\bigcup_{n\in\w}V_n$. By the $\ddot\w$-regularity of $X$, each point $x\in X\setminus V$ has a neighborhood $O_x\subset X$ whose closure $\bar O_x$ does not intersect the closed discrete subset $D$ of $X$. Since $X$ is Lindel\"of, there exists a countable set $\{x_n\}_{n\in\w}\subset X\setminus V$ such that $X=V\cup \bigcup_{n\in\w}O_{x_n}$. For every $n\in \w$ consider the open neighborhood $U_n:=V_n\setminus\bigcup_{k\le n}\bar O_{x_k}$ of $z_n$ and observe that the family $(U_n)_{n\in\w}$ is disjoint and locally finite in $X$, witnessing that the set $D$ is strongly discrete in $X$.
\end{proof}

The following proposition shows that the property described in Theorem~\ref{t:N2} holds for $\ddot\w$-regular  spaces.

\begin{proposition} Every infinite subset $I$ of an $\ddot\w$-regular $T_1$-space $X$ contains an infinite subset $D\subset I$, which is strictly discrete in $X$.
\end{proposition}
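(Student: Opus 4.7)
The plan is to split on whether $I$ has an accumulation point in $X$ or not.

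In the first case, $I$ has no accumulation point in $X$. Then for each $y\in X$ some neighborhood of $y$ is disjoint from $I\setminus\{y\}$; specializing to $y\in I$ shows that $I$ is discrete, and $\overline{I}=I$ shows that $I$ is closed in $X$. I then pick any countably infinite subset $I'\subset I$, which inherits both properties: the isolating neighborhoods still isolate points within the smaller set, and any hypothetical accumulation point of $I'$ would lie in the closed discrete set $I$, which has only isolated points. Hence $I'$ is a countable closed discrete subset of the $\ddot\w$-regular $T_1$-space $X$, and Proposition~\ref{p:sd} immediately yields that $I'$ is strictly discrete in $X$. Setting $D:=I'$ finishes this case.

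In the second case, $I$ has an accumulation point $y\in X$. Since $X$ is $T_1$, every neighborhood of $y$ meets $I$ in an infinite set (any finite intersection could be trimmed away using that singletons are closed). The $\ddot\w$-regularity axiom together with the $T_1$-axiom yields Hausdorffness of $X$, as already noted in the proof of Proposition~\ref{p:sd}. I can then transplant the inductive construction from the proof of Theorem~\ref{t:N2} directly into $X$: start by picking $x_0\in I\setminus\{y\}$ and disjoint open neighborhoods $V_0\ni x_0$ and $U_0\ni y$; having built $x_n,V_n,U_n$ with $V_n\cap U_n=\emptyset$, use infinitude of $U_n\cap I$ to pick $x_{n+1}\in U_n\cap I\setminus\{y\}$, separate $x_{n+1}$ from $y$ by disjoint opens, and intersect them with $U_n$ to obtain $V_{n+1},U_{n+1}\subset U_n$ with $V_{n+1}\cap U_{n+1}=\emptyset$. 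The nesting $V_m\subset U_n$ for $m>n$ combined with $V_n\cap U_n=\emptyset$ makes $(V_n)_{n\in\w}$ pairwise disjoint, so $D:=\{x_n:n\in\w\}$ is the required infinite strictly discrete subset.

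The conceptual heart of the argument is the elementary dichotomy \emph{every infinite subset of a topological space is either closed discrete or has an accumulation point}. I do not anticipate any serious technical obstacle: Case~1 is handed off to Proposition~\ref{p:sd}, and Case~2 is essentially the Hausdorff-only fragment of the proof of Theorem~\ref{t:N2}, since what that earlier proof truly used about the ambient countably compact Hausdorff space was only the existence of an accumulation point for $I$ together with Hausdorffness, both of which are now available inside $X$ itself.
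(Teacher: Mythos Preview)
Your proposal is correct and follows essentially the same route as the paper's proof: the same dichotomy on whether $I$ has an accumulation point, invoking Proposition~\ref{p:sd} in the closed-discrete case after passing to a countable subset, and repeating the construction of Theorem~\ref{t:N2} in the accumulation-point case. You supply more detail (in particular, the verification that the countable subset $I'$ remains closed and discrete), but the strategy is identical.
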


\begin{proof} If $I$ has an accumulation point in $X$, then a strictly discrete infinite subset can be constructed repeating the argument of the proof of Theorem~\ref{t:N2}. So, we assume that $I$ has no accumulation point in $X$ and hence $I$ is closed and discrete in $X$. Replacing $I$ by a countable infinite subset of $I$, we can assume that $I$ is countable. By Proposition~\ref{p:sd}, the set $I$ is strictly discrete in $X$.
\end{proof}

A topological space $X$ is called {\em superconnected} \cite{BMT} if for any non-empty open sets $U_1,\dots, U_n$ the intersection $\overline{U}_1\cap\dots\cap\overline{U}_n$ is not empty. It is clear that a superconnected space containing more than one point is not regular. An example of a superconnected second-countable Hausdorff space can be found in \cite{BMT}.

\begin{proposition} Any first-countable superconnected Hausdorff  space $X$ with $|X|>1$ contains an infinite set $I\subset X$ such that each infinite subset $D\subset I$ is not  strictly discrete in $X$.
\end{proposition}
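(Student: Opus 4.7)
The plan is to construct an infinite sequence of ``boundary'' points lying between neighbourhoods of two chosen points, and to use superconnectedness to prevent strict discreteness of any infinite subset.

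Since $|X|>1$, fix two distinct points $x,y\in X$, and use Hausdorffness and first-countability to choose decreasing open neighbourhood bases $(U_n)_{n\in\w}$ at $x$ and $(V_n)_{n\in\w}$ at $y$ with $U_0\cap V_0=\emptyset$. Applying superconnectedness to each pair $U_n,V_n$ yields $I_n:=\overline{U_n}\cap\overline{V_n}\ne\emptyset$, and a standard Hausdorff plus first-countability argument gives $\bigcap_n\overline{U_n}=\{x\}$ and $\bigcap_n\overline{V_n}=\{y\}$, so $\bigcap_n I_n=\emptyset$. Hence no point of $X$ lies in $I_n$ for all large $n$, and I can inductively pick pairwise distinct $z_n\in I_n$; set $I:=\{z_n:n\in\w\}$.

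To show that no infinite $D\subseteq I$ is strictly discrete in $X$, assume for contradiction that $D=\{z_{n_k}\}_{k\in\w}$ (with $n_k$ strictly increasing) is strictly discrete, witnessed by pairwise disjoint open neighbourhoods $O_k\ni z_{n_k}$. Since $z_{n_k}\in\overline{U_{n_k}}\cap\overline{V_{n_k}}$, the sets $O_k\cap U_{n_k}$ and $O_k\cap V_{n_k}$ are non-empty and open. For each $K\in\w$, put
\[
A_K:=\bigcup_{k\ge K}(O_k\cap U_{n_k})\subseteq U_{n_K},\qquad B_K:=\bigcup_{k\ge K}(O_k\cap V_{n_k})\subseteq V_{n_K}.
\]
Pairwise disjointness of the $O_k$ together with $U_{n_k}\cap V_{n_k}\subseteq U_0\cap V_0=\emptyset$ makes $A_K\cap B_K=\emptyset$, so superconnectedness produces $p_K\in\overline{A_K}\cap\overline{B_K}\subseteq \overline{U_{n_K}}\cap\overline{V_{n_K}}=I_{n_K}$.

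If the sequence $(p_K)_{K\in\w}$ admits an accumulation point $p\in X$, then for every $L$ almost every $p_K$ belongs to the closed set $I_{n_L}$, whence $p\in I_{n_L}$ and consequently $p\in\bigcap_L I_{n_L}=\emptyset$, a contradiction. The hard part is precisely guaranteeing such an accumulation point: since $X$ is not assumed countably compact, this does not follow immediately. I expect to resolve it by refining each $O_k$ along a decreasing countable neighbourhood base at $z_{n_k}$ supplied by first-countability, and then applying superconnectedness diagonally along these shrinking bases so that the $p_K$'s are forced into a nested collection of non-empty closed sets whose first-countable Hausdorff structure supplies a convergent subsequence.
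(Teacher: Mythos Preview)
Your argument has a genuine gap that you yourself flag, and the proposed repair does not work.  Observe first that the detour through $A_K$ and $B_K$ is idle: you obtain $p_K\in\overline{A_K}\cap\overline{B_K}\subset I_{n_K}$, but any point of the nonempty set $I_{n_K}=\overline{U_{n_K}}\cap\overline{V_{n_K}}$ would serve equally well, so the disjointness of the $O_k$ has not yet been used in the contradiction you are aiming at.  Consequently, if your ``accumulation point'' step could be completed, it would prove that every sequence $p_K\in I_{n_K}$ has an accumulation point in $\bigcap_K I_{n_K}=\emptyset$---a statement that has nothing to do with $D$ and is simply false in the absence of compactness.  The diagonal refinement you sketch does not rescue this: with neighbourhood bases $(W_{k,j})_j$ at $z_{n_k}$ and $p_K\in\bigcap_{k\le K}\overline{W_{k,K}}$, you only get $p_K\in\overline{W_{0,K}}$, and without regularity (which superconnected spaces with more than one point never have) this does \emph{not} force $p_K\to z_{n_0}$, since $W_{0,K}\subset N$ implies only $p_K\in\overline N$, not $p_K\in N$.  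First-countability plus Hausdorffness gives neither compactness nor regularity, so no accumulation or convergence is available.

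The underlying difficulty is that your set $I$ is built with reference only to the two anchor points $x,y$; the points $z_n$ carry no relation to one another.  The paper avoids this by constructing the sequence recursively: fix decreasing bases $\{U_{x,n}\}_n$ at every $x$, choose $x_0\ne x_1$, and for $n\ge 2$ use superconnectedness to pick $x_n\in\bigcap_{k<n}\overline{U_{x_k,n}}$.  Now each later term lies in the closure of arbitrarily small neighbourhoods of \emph{every} earlier term.  If some infinite $D\subset\{x_n\}$ were strictly discrete with disjoint neighbourhoods $(O_x)_{x\in D}$, pick $x_k\in D$, choose $m>k$ large enough that $U_{x_k,m}\subset O_{x_k}$ and $x_m\in D$; then $x_m\in\overline{U_{x_k,m}}\subset\overline{O_{x_k}}$ forces $O_{x_m}\cap O_{x_k}\ne\emptyset$, a contradiction.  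No accumulation-point argument is needed.
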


\begin{proof} For every point $x\in X$ fix a countable neighborhood base $\{U_{x,n}\}_{n\in\w}$ at $x$ such that $U_{x,n+1}\subset U_{x,n}$ for every $n\in\w$.

Choose any two distinct points $x_0,x_1\in X$ and for every $n\ge 2$ choose a point $x_n\in\bigcap_{k<n}\overline{U}_{x_k,n}$. We claim that the set $I=\{x_n\}_{n\in\w}$ is infinite. In the opposite case, we use the Hausdorff property and find a neighborhood $V$ of $x_0$ such that $\overline{V}\cap I=\{x_0\}$. Find $m\in\w$ such that $U_{x_0,m}\subset V$ and $x_0\notin \overline{U}_{x_1,m}$. Observe that $$x_m\in I\cap \overline{U}_{x_0,m}\cap\overline{U}_{x_1,m}=\{x_0\}\cap \overline{U}_{x_1,m}=\emptyset,$$ which is
a desired contradiction showing that the set $I$ is infinite.

Next, we show that any infinite subset $D\subset I$ is not strictly discrete in $X$. To derive a contradiction, assume that $D$ is strictly discrete. Then each point $x\in D$ has a neighborhood $O_x\subset X$ such that the family $(O_x)_{x\in D}$ is disjoint. Choose any point $x_k\in D$ and find $m\in\w$ such that $U_{x_k,m}\subset O_{x_k}$. Replacing $m$ by a larger number, we can assume that $m>k$ and $x_m\in D$. Since $x_m\in\overline{U}_{x_k,m}\subset \overline O_{x_k}$, the intersection $O_{x_m}\cap O_{x_k}$ is not empty, which contradicts the choice of the neighborhoods $O_x$, $x\in D$.
\end{proof}

Next, we establish one property of subspaces of functionally Hausdorff countably compact spaces. We recall that a topological space $X$ is {\em functionally Hausdorff} if for any distinct points $x,y\in X$ there exists a continuous function $f:X\to [0,1]$ such that $f(x)=0$ and $f(x)=1$.

A subset $U$ of a topological space $X$ is called {\em functionally open} if $U=f^{-1}(V)$ for some continuous function $f:X\to\IR$ and some open set $V\subset\IR$.

A subset $K\subset X$ of topological space is called {\em functionally compact} if each open cover of $K$ by functionally open subsets of $X$ has a finite subcover.

\begin{proposition} If $X$ is a subspace of a functionally Hausdorff countably compact space $Y$, then no infinite closed discrete subspace $D\subset X$ is contained in a functionally compact subset of $X$.
\end{proposition}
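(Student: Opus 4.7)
My plan is to argue by contradiction: suppose an infinite closed discrete subspace $D\subset X$ is contained in a functionally compact set $K\subset X$. The first step would be to extract an accumulation point $y\in Y$ of $D$ using countable compactness of $Y$, and then verify that $y\notin X$ --- otherwise $y$ would be an accumulation point of $D$ in $X$ as well (every $X$-neighborhood of $y$ is the trace of a $Y$-neighborhood), forcing $y\in\overline{D}^{X}=D$ and contradicting the discreteness of $D$ at $y$.

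The next step is to cover $K$ by functionally open subsets of $X$ that stay far from $y$. For each $x\in K$ I would use the functional Hausdorffness of $Y$ to pick a continuous $f_x:Y\to[0,1]$ with $f_x(x)=0$ and $f_x(y)=1$, and set $V_x=\{z\in X: f_x(z)<1/2\}$, which is functionally open in $X$ because $f_x|_X$ is continuous. The family $\{V_x:x\in K\}$ then covers $K$, and by functional compactness it admits a finite subcover $V_{x_1},\dots,V_{x_n}$, so that $D\subset K\subset\bigcup_{i=1}^{n}V_{x_i}$.

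To finish, I would exhibit a $Y$-neighborhood of $y$ disjoint from $D$, contradicting that $y$ is an accumulation point of $D$ in $Y$. The natural candidate is $W=\bigcap_{i=1}^{n}f_{x_i}^{-1}((1/2,1])$, which is open in $Y$ and contains $y$ (since $f_{x_i}(y)=1$ for every $i$), yet is disjoint from each $V_{x_i}$ and hence from $D$. There is no deep obstacle in the argument; the only point requiring minor care is to use that the sets $V_x$ are functionally open in the subspace $X$ (and not merely in $Y$), so that the hypothesis of functional compactness actually applies --- this is automatic because continuity is preserved under restriction to subspaces.
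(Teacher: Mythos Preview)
Your argument is correct and follows essentially the same route as the paper's proof: pass to an accumulation point $y\in Y\setminus X$, separate each $x\in K$ from $y$ by a continuous $f_x$, extract a finite subcover by functional compactness, and produce a neighborhood of $y$ missing $D$. Your use of the intersection $W=\bigcap_{i=1}^{n}f_{x_i}^{-1}((1/2,1])$ is in fact slightly cleaner than the paper's formulation via $f=\max_{x\in E}f_x$ (which should really be $\min$ for the inclusion $K\subset f^{-1}([0,\tfrac12))$ to hold).
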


\begin{proof} To derive a contradiction, assume that $D$ is contained in a functionally compact subset $K$ of $X$. By the countable compactness of $Y$, the set $D$ has an assumulation point $y\in Y$. Since $D$ is closed and discrete in $X$, the point $y$ does not belong to $X$ and hence $y\notin K$. Since $Y$ is functionally Hausdorff, for every $x\in K$ there exists a continuous function $f_x:Y\to[0,1]$ such that $f_x(x)=0$ and $f_x(y)=1$. By the functional compactness of $K$, the cover $\{f_x^{-1}([0,\frac12)):x\in K\}$ contains a finite subcover $\{f_x^{-1}([0,\frac12)):x\in E\}$ where $E$ is a finite subset of $K$. Then $D\subset K\subset f^{-1}([0,\frac12))$ for the continuous function $f=\max_{x\in E}f_x:Y\to [0,1]$, and $f^{-1}((\frac12,1])$ is a neighborhood of $y$, which is disjoint with the set $D$. But this is not possible as $y$ is an accumulation point of $D$.
\end{proof}

Finally, we construct an example of a regular separable first-countable scattered space that embeds into a Hausdorff countably compact space but does not embed into Urysohn countably compact spaces. We recall that a topological space $X$ is {\em Urysohn} if any distinct points of $X$ have disjoint closed neighborhoods in $X$.

\begin{example}\label{e:d} There exists a topological space $X$ such that
\begin{enumerate}
\item $X$ is regular, separable, and first-countable;
\item $X$ can be embedded into a Hausdorff totally countably compact space;
\item $X$ cannot be embedded into an Urysohn countably compact space.
\end{enumerate}
\end{example}

\begin{proof} In the construction of the space $X$ we shall use almost disjoint dominating subsets of $\w^\w$. Let us recall \cite{vD} that a subset $D\subset\w^\w$ is called {\em dominating} if for any  $x\in\w^\w$ there exists $y\in D$ such that $x\le^* y$, which means that $x(n)\le y(n)$ for all but finitely many numbers $n\in\w$.
By $\mathfrak d$ we denote the smallest cardinality of a dominating subset $D\subset\w^\w$. It is clear that $\w_1\le\mathfrak d\le\mathfrak c$.

We say that a family of function $D\subset\w^\w$ is {\em almost disjoint} if for any distinct $x,y\in D$ the intersection $x\cap y$ is finite. Here we identify a function $x\in \w^\w$ with its graph $\{(n,x(n)):n\in\w\}$ and hence identify the set of functions $\w^\w$ with a subset of the family $[\w\times\w]^\w$ of all infinite subsets of $\w\times\w$ .

\begin{claim}\label{cl1} There exists an almost disjoint dominating subset $D\subset\w^\w$ of cardinality $|D|=\mathfrak d$.
\end{claim}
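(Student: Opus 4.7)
The plan is to take any dominating family of size $\mathfrak d$ and ``spread out'' its members so that their graphs almost never meet, without destroying the dominating property. The key device is to encode initial segments by a single integer in a way that is injective in the segment and still bounds the last coordinate.

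Concretely, I would fix a dominating family $D_0=\{x_\alpha:\alpha<\mathfrak d\}\subset\w^\w$ of cardinality $\mathfrak d$ (which exists by definition of $\mathfrak d$) and an injective coding $c:\w^{<\w}\to\w$ with the monotonicity property $c(a_0,\dots,a_n)\ge a_n$ for every finite tuple (any standard pairing, e.g.\ $c(a_0,\dots,a_n)=\prod_{i\le n}p_i^{a_i+1}$ with $p_i$ the $i$-th prime, works). For $x\in\w^\w$ define $\hat x\in\w^\w$ by
$$\hat x(n)=c(x(0),x(1),\dots,x(n)),$$
and set $D=\{\hat x_\alpha:\alpha<\mathfrak d\}$.

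Three verifications are then needed, each short. First, $x\mapsto\hat x$ is injective: if $x\ne y$, choose the least $n$ with $x(n)\ne y(n)$; then $(x(0),\dots,x(n))\ne(y(0),\dots,y(n))$, so injectivity of $c$ gives $\hat x(n)\ne\hat y(n)$. This implies $|D|=|D_0|=\mathfrak d$. Second, $D$ is dominating: since $\hat x(n)\ge x(n)$ for every $n$, we have $x\le\hat x$ pointwise, and so any $z\in\w^\w$ which is eventually dominated by some $x_\alpha\in D_0$ is also eventually dominated by $\hat x_\alpha\in D$. Third, $D$ is almost disjoint in the sense of the paper: if $\hat x\ne\hat y$ in $D$, then $x\ne y$ in $D_0$, and letting $n_0$ be the least index where $x$ and $y$ differ, the same argument as in the first verification shows $\hat x(n)\ne\hat y(n)$ for every $n\ge n_0$. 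Hence the graph-intersection $\hat x\cap\hat y\subset\{0,1,\dots,n_0-1\}\times\w$ is finite.

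I do not expect a serious obstacle; the only point requiring a moment of care is choosing a coding $c$ that is simultaneously injective on $\w^{<\w}$ and satisfies $c(a_0,\dots,a_n)\ge a_n$, which is what allows a single construction to deliver both almost-disjointness (via injectivity) and preservation of dominance (via the lower bound on the last coordinate).
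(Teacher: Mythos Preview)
Your argument is correct. Both the dominance and the almost-disjointness of $D$ follow cleanly from the two properties you demand of $c$, and the prime-product example certainly satisfies them.

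The route, however, is genuinely different from the paper's. The paper fixes an almost disjoint family $\{A_\alpha\}_{\alpha<\mathfrak c}\subset[\w]^\w$ and, for each $x_\alpha$ in a dominating family, picks a strictly increasing $y_\alpha:\w\to A_\alpha$ with $x_\alpha\le y_\alpha$; almost-disjointness of the graphs then comes from almost-disjointness of the \emph{ranges}. Your approach instead encodes the whole initial segment $x{\restriction}(n{+}1)$ into $\hat x(n)$, so that once two functions diverge their codings diverge forever; almost-disjointness is thus obtained from injectivity of the coding rather than from an auxiliary almost disjoint family. Your construction is a bit more self-contained (it does not invoke the existence of a size-$\mathfrak c$ almost disjoint family), while the paper's version additionally yields strictly increasing functions, which is occasionally convenient though not needed here.
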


\begin{proof} By the definition of $\mathfrak d$, there exists a dominating family $\{x_\alpha\}_{\alpha\in\mathfrak d}\subset \w^\w$. It is well-known that $[\w]^\w$ contains an almost disjoint family $\{A_\alpha\}_{\alpha\in\mathfrak c}$ of cardinality continuum. For every $\alpha<\mathfrak d$ choose a strictly increasing  function $y_\alpha:\w\to A_\alpha$ such that $x_\alpha\le y_\alpha$. Then the set $D=\{y_\alpha\}_{\alpha\in \mathfrak d}$ is dominating and almost disjoint.
\end{proof}

By Claim~\ref{cl1}, there exists an almost disjoint  dominating subset $D\subset\w^\w\subset[\w\times\w]^\w$. For every $n\in\w$ consider the vertical line $\lambda_n=\{n\}\times\w$ and observe that the family $L=\{\lambda_n\}_{n\in\w}$ is disjoint and the family  $D\cup L\subset[\w\times\w]^\w$ is almost disjoint.

Consider the space $Y=(D\cup L)\cup(\w\times\w)$ endowed with the topology consisting of the sets $U\subset Y$ such that for every $y\in (D\cup L)\cap U$ the set $y\setminus U\subset\w\times\w$ is finite. Observe that all points in the set $\w\times\w$ are isolated in $Y$. Using the almost disjointness of the family $D\cup L$, it can be shown that the space $Y$ is regular, separable, locally countable, scattered and locally compact.

Choose any point $\infty\notin \w\times Y$ and consider the space $Z=\{\infty\}\cup(\w\times Y)$ endowed with the topology consisting of the sets $W\subset Z$ such that
\begin{itemize}
\item for every $n\in\w$ the set $\{y\in Y:(n,y)\in W\}$ is open in $Y$, and
\item if $\infty\in W$, then there exists $n\in\w$ such that $\bigcup_{m\ge n}\{m\}\times Y\subset W$.
\end{itemize}
It is easy to see $Z=\{\infty\}\cup(\w\times Y)$ is first-countable, separable, scattered and regular.

Let $\sim$ be the smallest equivalence relation on $Z$ such that $$\mbox{$(2n,\lambda)\sim(2n+1,\lambda)$ and $(2n+1,d)\sim (2n+2,d)$}
$$for any $n\in\w$, $\lambda\in L$ and $d\in D$.

Let $X$ be the quotient space $Z/_\sim$ of $Z$ by the equivalence relation $\sim$. It is easy to see that the equivalence relation $\sim$ has at most two-element equivalence classes and the quotient map $q:Z\to X$ is closed and hence perfect. Applying \cite[3.7.20]{Eng}, we conclude that the space $X$ is regular. It is easy to see that $X$ is separable, scattered and first-countable.
It remains to show that $X$ has the properties (2), (3) of Example~\ref{e:d}.
This is proved in the following two claims.

\begin{claim} The space $X$ does not admit an embedding into an Urysohn countably compact space.
\end{claim}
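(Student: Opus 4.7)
The plan is to argue by contradiction: assume that $X$ embeds as a subspace of some Urysohn countably compact space $T$, and derive a contradiction by producing two points in $T$ whose every pair of open neighborhoods has intersecting closures. The idea is to set up two ``competing'' countable closed discrete sets in level $0$ of $X$, whose accumulations in $T$ are forced by the Urysohn axiom to be separated by closures but whose $X$-structure (governed by the almost disjoint dominating family $D$) prevents this.

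Focusing on level $0$ of $X$, set $\ell_n:=q(0,\lambda_n)$ for $n\in\w$ and $\ell'_d:=q(0,d)$ for $d\in D$. The almost disjointness of $D\cup L$ ensures that $A:=\{\ell_n:n\in\w\}$ and $B:=\{\ell'_{d_k}:k\in\w\}$ (for any countable $\{d_k\}_{k\in\w}\subseteq D$) are closed and discrete in $X$. I would choose the family $\{d_k\}$ via a diagonal application of the dominating property of $D$ so that the sequence $(d_k(n))_{k\in\w}$ tends to infinity for each $n\in\w$. By countable compactness of $T$, $A$ has an accumulation point $p\in T\setminus X$ and $B$ has an accumulation point $p'\in T\setminus X$; if Urysohn holds, pick open $W_p\ni p$, $W_{p'}\ni p'$ with $\overline{W_p}\cap\overline{W_{p'}}=\emptyset$.

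Unpacking $W_p\cap X$ yields, for each $\ell_n\in W_p$, a basic neighborhood $\{\ell_n\}\cup q(\{0\}\times(\lambda_n\setminus F_n))$ with finite $F_n\subseteq\lambda_n$, encoding a function $f\in\w^\w$ via $f(n):=1+\max\{j:(n,j)\in F_n\}$ (and $0$ otherwise). The dominating property of $D$ furnishes $d^\star\in D$ with $f\le^\ast d^\star$; the isolated points $q(0,(n,d^\star(n)))$ therefore eventually lie in $W_p$, and every basic neighborhood of $\ell'_{d^\star}$ misses only finitely many of them, so $\ell'_{d^\star}\in\overline{W_p}$. A parallel analysis for $W_{p'}$, using the widening property of $(d_k)$, places $\ell_n\in\overline{W_{p'}}$ for every $n$ in a cofinite subset of $\w$. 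Taking $n$ in the intersection with the index set of $W_p$ yields $\ell_n\in W_p\cap\overline{W_{p'}}\subseteq\overline{W_p}\cap\overline{W_{p'}}$, the desired contradiction.

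The main obstacle is the matching step: the dominating-provided $d^\star$ need not belong to the countable family $(d_k)$, and the ``reverse'' part of the argument requires the finite exception sets $G_k$ of the $\ell'_{d_k}$'s in $W_{p'}$ not to conspire against the widening. In the clean case $\mathfrak d=\w$ one can take $(d_k)$ to be a countable dominating subfamily of $D$ and the argument goes through directly; for general $\mathfrak d$, I expect the resolution to exploit the multi-level ladder structure of $X$ via the $\sim$-identifications, iterating the dominating argument through successive levels so that the required matching is achieved through the ladder and the Urysohn contradiction materializes at some high level rather than already at level $0$.
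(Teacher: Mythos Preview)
Your level-$0$ approach cannot succeed as stated, and the obstacle is not merely the ``reverse'' step. Observe that the subspace $q(\{0\}\times Y)$ of $X$ is homeomorphic to $Y$, a locally compact Hausdorff Mr\'owka-type space; its one-point compactification is compact Hausdorff, hence Urysohn, and in it \emph{every} infinite closed discrete subset of $Y$ accumulates at the single added point. Thus an argument that uses only level~$0$ would, if valid, show that $Y$ itself fails to embed in any Urysohn countably compact space---which is false. Concretely, nothing in your outline forces $p\ne p'$: you apply the Urysohn axiom to the pair without justifying that they are distinct, and in natural compactifications they are not. The acknowledged trouble with the finite exception sets $G_k$ in the reverse step is a second, independent gap, and it does not disappear when $\mathfrak d=\w$.

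The mechanism actually used is not an ``iteration through levels'' but a single application of domination at a carefully chosen hinge. Fix an accumulation point $c_0$ of $q(\{0\}\times L)$ in the ambient space; since $c_0\ne q(\infty)$ and a neighborhood of $q(\infty)$ absorbs all sufficiently high levels, there is a \emph{largest} $l$ with $c_0$ still an accumulation point of $q(\{l\}\times L)$. The $L$-identifications $q(2n,\lambda)=q(2n{+}1,\lambda)$ force $l$ to be odd. By maximality of $l$, any accumulation point $c_1$ of $q(\{l{+}1\}\times L)$ is distinct from $c_0$, so Urysohn separation by $U_0,U_1$ with $\overline{U_0}\cap\overline{U_1}=\emptyset$ is now legitimately available. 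For $i\in\{0,1\}$ read off the finite exceptions along each $\lambda_n$ with $q(l{+}i,\lambda_n)\in U_i$ to get partial functions $f_i$, and choose one $f\in D$ dominating both; then infinitely many graph points of $f$ at level $l{+}i$ lie in $q^{-1}(U_i)$, so $q(l{+}i,f)\in\overline{U_i}$. Since $l$ is odd, the $D$-identification gives $q(l,f)=q(l{+}1,f)\in\overline{U_0}\cap\overline{U_1}=\emptyset$. No countable subfamily of $D$, no reverse direction, and no case split on $\mathfrak d$ are needed: a single dominating function, placed at the hinge between two adjacent levels, carries the whole contradiction.
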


\begin{proof} To derive a contradiction, assume that $X=q(Z)$ is a subspace of an Urysohn countably compact space $C$. By the countable compactness of $C$, the set $q(\{0\}\times L)\subset X\subset C$ has an accumulation point $c_0\in C$. The point $c_0$ is distinct from $q(\infty)$, as $q(\infty)$ is not an accumulation point of the set $q(\{0\}\times L)$ in $X$. Let $l\in\w$ be the largest number such that $c_0$ is an accumulation point of the set $q(\{l\}\times L)$ in $C$.

Let us show that the number $l$ is well-defined. Indeed, by the Hausdorffness of the space $C$, there exists a neighborhood $W\subset C$ of $q(\infty)$ such that $c_0\not\subset\overline{W}$. By the definition of the topology of the space $Z$, there exists $m\in\w$ such that $\bigcup_{k\ge m}\{k\}\times Y\subset q^{-1}(W)$. Then $c_0$ is not an accumulation point of the set $\bigcup_{k\ge m}q(\{k\}\times L)$ and hence the number $l$ is well-defined and $l<m$.

The definition of the equivalence relation $\sim$ implies that the number $l$ is odd. By the countable compactness of $C$, the infinite set $q(\{l+1\}\times L)$ has an accumulation point $c_1\in C$. The maximality of $l$ ensures that $c_1\ne c_0$. By the Urysohn property of $C$, the points $c_0,c_1$ have open neighborhoods $U_0,U_1\subset C$ with disjoint closures in $C$.

For every $i\in\{0,1\}$ consider the set $J_i=\{n\in\w:q(l+i,\lambda_n)\in U_i\}$, which is infinite, because $c_i$ is an accumulation point of the set $q(\{l+i\}\times L)=\{q(l+i,\lambda_n):n\in\w\}$. For every $n\in J_i$ the open set $q^{-1}(U_i)\subset Z$ contains the pair $(l+i,\lambda_n)$. By the definition of the topology at $(l+i,\lambda_n)$, the set $(\{l+i\}\times \lambda_n)\setminus q^{-1}(U_i)\subset \{l+i\}\times\{n\}\times\w$ is finite and hence is contained in the set $\{l+i\}\times\{n\}\times[0,f_i(n)]$ for some number $f_i(n)\in\w$. Using the dominating property of the family $D$, choose a function $f\in D$ such that $f(n)\ge f_i(n)$ for any $i\in\{0,1\}$ and $n\in J_i$. It follows that for every $i\in\{1,2\}$ the set $\{l+i\}\times f\subset\{l+i\}\times(\w\times\w)$ has infinite intersections with the preimage $q^{-1}(U_i)$ and hence $\{(l+i,f)\}\in\overline{q^{-1}(U_i)}\subset q^{-1}(\overline{U}_i)$. Taking into account that the number $l$ is odd,
we conclude that $$q(l,f)=q(l+1,f)\in\overline{U}_0\cap\overline{U}_1=\emptyset.$$
which is a desired contradiction completing the proof of the claim.
\end{proof}

\begin{claim} The space $X$ admits an embedding into a Hausdorff totally countably compact space.
\end{claim}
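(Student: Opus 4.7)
The plan is to mimic Example~\ref{e3}: extend the almost disjoint family $D\cup L$ to a maximal almost disjoint family of subsets of $\w\times\w$ by Zorn's lemma, rebuild the construction of $X$ with this bigger family in place of $D\cup L$, and then close up under a Wallman $\w$-compactification of the added closed discrete subset. Fix a maximal almost disjoint $\mathcal{B}\subset[\w\times\w]^{\w}$ containing $D\cup L$, and endow $Y^{\star}=(\w\times\w)\cup\mathcal{B}$ with the Mr\'owka topology in which $U\subset Y^{\star}$ is open iff $B\setminus U$ is finite for every $B\in\mathcal{B}\cap U$; then $Y$ is an open subspace of $Y^{\star}$. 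Form $Z^{\star}=\{\infty\}\cup(\w\times Y^{\star})$ with the topology analogous to that of $Z$, extend $\sim$ trivially to $Z^{\star}$ (admissible because $\sim$ only identifies points of $\w\times(D\cup L)\subset\w\times\mathcal{B}$), and let $X^{\star}=Z^{\star}/{\sim}$ with quotient $q^{\star}$. The same perfect-quotient argument used for $X$ (invoking \cite[3.7.20]{Eng}) shows that $X^{\star}$ is Hausdorff and regular and that $X$ embeds in $X^{\star}$ as a subspace.

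Put $\mathcal{M}:=q^{\star}(\w\times\mathcal{B})\subset X^{\star}$; by almost disjointness of $\mathcal{B}$ the set $\mathcal{M}$ is discrete in $X^{\star}$. Inside the Wallman compactification $W(X^{\star})$ define
$$\tilde X := X^{\star}\cup W_{\w}\mathcal{M}=X^{\star}\cup\bigcup\{\overline{C}^{\,W(X^{\star})}:C\subset\mathcal{M},\;|C|\le\w\}.$$
Since $\mathcal{M}$ is discrete and hence trivially $\overline{\w}$-normal, Theorem~\ref{t:w} yields that $W_{\w}\mathcal{M}$ is $\w$-bounded and Hausdorff.

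It remains to verify that $\tilde X$ is Hausdorff and totally countably compact. For total countable compactness, given any countable infinite $I\subset\tilde X$, either $I\cap W_{\w}\mathcal{M}$ is infinite (compact closure by $\w$-boundedness of $W_{\w}\mathcal{M}$); or $I$ is cofinitely contained in $q^{\star}(\w\times(\w\times\w))$, in which case either maximality of $\mathcal{B}$ at some level $n$ where $I$ has infinite trace produces $B\in\mathcal{B}$ with $B\cap I$ infinite and compact closure at $q^{\star}(n,B)$, or $I$ meets infinitely many levels and a suitable subsequence converges to $q^{\star}(\infty)$. For Hausdorffness, follow the case analysis of Example~\ref{e3} on whether each of two distinct points $a,b\in\tilde X$ is principal: principal pairs are separated in the Hausdorff space $X^{\star}$; for a principal $a\ne q^{\star}(\infty)$ and a non-principal $b$, use regularity of $X^{\star}$ at $a$; for two non-principal $a,b$, pick disjoint countable witnesses in $\mathcal{M}$ and form the disjoint staircase open neighborhoods of Example~\ref{e3}.

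The main obstacle is Hausdorff separation of $a=q^{\star}(\infty)$ from a non-principal $b\in W_{\w}\mathcal{M}$. A countable witness of $b$ has the form $\{q^{\star}(n_k,B_k)\}\subset\mathcal{M}$; using that $b$ is an ultrafilter, one arranges it to live in a single level $n$ (by passing to a subset still in $b$). Then the staircase open set
$$V=\bigcup_{k\in\w}\bigl(\{q^{\star}(n,B_k)\}\cup q^{\star}(\{n\}\times B_k)\setminus q^{\star}(\{n\}\times\bigcup_{j\le k}\lambda_j)\bigr)$$
meets every vertical line at every level in a finite set, so $X^{\star}\setminus\overline V$ is a neighborhood of $q^{\star}(\infty)$ disjoint from $V\ni b$. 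The identifications under $\sim$ are benign because they only pair adjacent levels and do not touch the tail structure at $\infty$, which is precisely why the Urysohn obstruction of the previous claim disappears here.
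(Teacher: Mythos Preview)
Your approach differs from the paper's in the order of operations: the paper applies the Wallman $\omega$-construction at the level of the Mr\'owka space $Y_M$ (so that Example~\ref{e3} applies verbatim to produce a totally countably compact Hausdorff $\tilde Y$), \emph{then} forms $\tilde Z=\{\infty\}\cup(\omega\times\tilde Y)$ and passes to a quotient by an equivalence relation that identifies points of $W_\omega L$ and $W_\omega D$ (not merely $L$ and $D$) between adjacent levels. You instead take the quotient first and only afterwards perform the Wallman $\omega$-construction on the resulting space $X^\star$. The paper's route keeps the Wallman step in the exact setting of Example~\ref{e3} and pushes all residual work into a final perfect quotient; your route keeps the quotient elementary but forces the Hausdorff verification to be done in the presence of the level identifications of $\sim$.

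Your argument has two gaps. First, the staircase set $V$ you write in the ``main obstacle'' case need not be open in $X^\star$: if some $B_k\in D\cup L$ then $q^\star(n,B_k)=q^\star(n\pm1,B_k)$, so $(q^\star)^{-1}(V)$ must contain a neighborhood of $(n\pm1,B_k)$ at the adjacent level, which your $V$ does not provide. Second, you have misidentified the obstacle: here $q^\star(\infty)$ is the limit of the \emph{level} sequence, not of vertical lines as in Example~\ref{e3}. Once you have (correctly) arranged that $b$ concentrates on a bounded set of levels, separation of $q^\star(\infty)$ from $b$ is immediate via an initial-segment/tail split of the levels; no staircase is needed. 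The genuinely delicate case is two non-principal ultrafilters concentrated on the same or adjacent levels, where one must combine the level structure with the almost-disjointness of $\mathcal B$ while ensuring that the resulting open sets are $\sim$-saturated; your one-line reference to Example~\ref{e3} does not address this.
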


\begin{proof} Using the Kuratowski-Zorn Lemma, enlarge the almost disjoint family $D\cup L$ to a maximal almost disjoint family $M\subset[\w\times\w]^\w$.
Consider the space $Y_M=M\cup(\w\times\w)$ endowed with the topology consisting of the sets $U\subset Y_M$ such that for every $y\in M\cap U$ the set $y\setminus U\subset\w\times\w$ is finite. It follows that $Y_M$ is a regular locally compact first-countable space, containing $Y$ as an open dense subspace.
The maximality of $M$ implies that each sequence in $\w\times\w$ contains a subsequence that converges to some point of the space $Y_M$. This property implies that the subspace $\tilde Y:=(W_\omega M)\cup(\w\times\w)$ of the Wallman extension of $W(Y_M)$ is totally countably compact. Repeating the argument from Example~\ref{e3}, one can show that the space $\tilde Y$ is Hausdorff.

Let $\tilde Z=\{\infty\}\cup(\w\times\tilde Y)$ where $\infty\notin\w\times\tilde Y$. The space $\tilde Z$ is endowed with the topology consisting of the sets
$W\subset \tilde Z$ such that
\begin{itemize}
\item for every $n\in\w$ the set $\{y\in \tilde Y:(n,y)\in W\}$ is open in $\tilde Y$, and
\item if $\infty\in W$, then there exists $n\in\w$ such that $\bigcup_{m\ge n}\{m\}\times \tilde Y\subset W$.
\end{itemize}
Taking into account that the space $\tilde Y$ is Hausdorff and totally countably compact, we can prove that so is the the space $\tilde Z$.

Let $\sim$ be the smallest equivalence relation on $\tilde Z$ such that $$\mbox{$(2n,\lambda)\sim(2n+1,\lambda)$ and $(2n+1,d)\sim (2n+2,d)$}
$$for any $n\in\w$, $\lambda\in W_\omega L$ and $d\in W_\omega D$.

Let $\tilde X$ be the quotient space $\tilde Z/_\sim$ of $\tilde Z$ by the equivalence relation $\sim$. It is easy to see that the space $\tilde X$ is Hausdorff, totally countably compact and contains the space $X$ as a dense subspace.
\end{proof}
\end{proof}

However, we do not know the answer on the following intriguing problem:

\begin{problem}
Is it true that each (scattered) regular topological space can be embedded into a Hausdorff countably compact topological space?
\end{problem}


\end{document}